\newtheorem{thm}{Theorem}[section]
\newtheorem{lem}[thm]{Lemma}
\newtheorem{pro}[thm]{Proposition}
\newtheorem{cor}[thm]{Corollary}
\def\ep{\varepsilon}
\def\SSN{{\mathbb S}^{N-1}}
\def\RN{\mathbb{R}^N}
\def\la{\lambda}
\def\ka{\kappa}
\def\pa{\partial}
\def\de{\delta}
\newcommand{\La}{\Lambda}
\newcommand{\om}{\omega}
\newcommand{\acc}{\`}
\newcommand{\RE}{\mathbb R}
\newcommand{\ovr}{\overline}
\title[Characterization of ellipsoids]{Characterization of ellipsoids \\
as $K$-dense sets}
\author[Magnanini]{Rolando Magnanini}
\address{Dipartimento di Matematica e Informatica ``U. Dini'', Universit\acc a di Firenze, viale Morgagni 67/A, 50134 Firenze, Italy}
\email{magnanin@math.unifi.it}
\author[Marini]{Michele Marini}
\address{Scuola Normale Superiore, Piazza dei Cavalieri 7, 56126 Pisa, Italy}
\email{michele.marini@sns.it}
\keywords{Uniformly dense sets, convex bodies, affine inequalities}
\subjclass[2010]{52A10, 52A20, 52A39, 52A40.}
\begin{document}

\begin{abstract}
Let $K\subset\RN$ be any convex body containing the origin.
A measurable set $G\subset\RN$ with finite and positive Lebesgue measure is
said to be $K$-dense if, for
any fixed $r>0,$ the measure of $G\cap (x+r K)$ is constant when $x$ varies on the boundary of $G$ 
(here, $x+r K$ denotes a translation of a dilation of $K$). 
In \cite{MM}, we proved for the case in which $N=2$ that if $G$ is $K$-dense, then both $G$ and $K$ 
must be homothetic to the same ellipse. 
Here,  we completely characterize $K$-dense sets in $\RN$: if $G$ is $K$-dense, then both $G$ and $K$ 
must be homothetic to the same ellipsoid. Our proof, by building upon results obtained in \cite{MM},  relies on an asymptotic formula for the measure of $G\cap (x+r K)$ for large values of the parameter $r$ 
and a classical characterization of ellipsoids due to C.M. Petty \cite{Pe}.
\end{abstract}

\maketitle

\section{Introduction}

Let $K$ be a convex body containing the origin of $\RN$ and $G$ be a measurable subset of $\RN$  
with finite positive Lebesgue measure $V(G).$ 
We say that $G$ is {\it $K$-dense} if 
there is a function $c:(0,\infty)\to(0,\infty)$ such that 
\begin{equation}
\label{Kdense}
V(G\cap (x+r\,K))=c(r) \ \mbox{ for } \ x\in\pa G, \ r>0.
\end{equation}
Here, $\pa G$ is the topological boundary of $G$ and $x+r\,K$ denotes the translation by a vector $x$ of
a dilation of $K$ by a factor $r>0$. When $K$ is the unit ball, $K$-dense sets were studied
in \cite{MPS} in connection with the so-called stationary isothermic (or time-invariant level) surfaces of solutions of the heat equation (see also \cite{MS} for a related paper).
\par
Plane $K$-dense sets have been characterized in \cite{ABG} and \cite{MM}. They cannot exist unless 
they are homothetic to $K$ itself and, if this is the case, they must be ellipses (together with $K$).  
In this paper, we shall extend that characterization to general dimension by proving
the following result.
\begin{thm}
\label{th:main}
Let $K\subset\RN$ be a convex body and assume that there is a set $G\subset\RN$ of finite
positive measure such that \eqref{Kdense} holds.
\par
Then, both $K$ and $G$ must be homothetic to the same ellipsoid. 
\end{thm}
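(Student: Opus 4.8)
The plan is to reduce the problem, by means of the results of \cite{MM}, to a pointwise identity on $\pa K$ relating the Gauss curvature to a power of the support function, and then to recognise that identity as the hypothesis of a classical characterization of ellipsoids due to Petty. I start from the preliminary reductions. That $G$ is bounded is quick: if $\pa G$ were unbounded one could choose $x_{1},x_{2},\dots\in\pa G$ with the translates $x_{j}+K$ pairwise disjoint, and since $V(G\cap(x_{j}+K))=c(1)>0$ this would force $V(G)=\infty$; boundedness of $G$ itself follows. Building on \cite{MM} (extended where needed to general $N$), one obtains that $G$ is a strictly convex body with real-analytic boundary, that the Gauss map $\nu\colon\pa G\to\SSN$ is a diffeomorphism, and that $G$ is homothetic to $K$. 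Since \eqref{Kdense} is invariant under homotheties---if $G=\rho K+v$ then $K$ is itself $K$-dense, with $c$ replaced by $s\mapsto\rho^{-N}c(\rho s)$---we may assume $G=K$, so that $V(K\cap(x+rK))=c(r)$ for all $x\in\pa K$ and $r>0$. Writing $V(K\cap(x+rK))=r^{N}\,V\bigl(K\cap\tfrac1r(K-x)\bigr)$, one finds $\lim_{r\to0^{+}}r^{-N}V(K\cap(x+rK))=V\bigl(K\cap\{z:\langle z,\nu(x)\rangle\le0\}\bigr)$, and this limit must be independent of $x$; as $\nu$ ranges over $\SSN$ this says that every hyperplane through the origin bisects $K$, so the origin is the centre of symmetry of $K$ and, in particular, $0\in\operatorname{int}K$.

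The heart of the proof is the asymptotics of $V(K\cap(x+rK))$ for $r$ close to the largest value that matters. For fixed $x\in\pa K$ the function $r\mapsto V(K\cap(x+rK))$ is nondecreasing and equals $V(K)$ exactly for $r\ge\bar r(x):=\inf\{r>0:K\subseteq x+rK\}$; since $c$ does not depend on $x$, the threshold $\bar r(x)$ is a constant $\bar r$, and central symmetry together with strict convexity force $\bar r=2$, with $\pa(x+2K)$ touching $\pa K$ at the single point $-x$. I would then analyse by blow-up the thin ``lens'' $K\setminus(x+rK)$, which for $r$ near $2$ is concentrated about $-x$ and squeezed between the internally tangent hypersurfaces $\pa K$ and $\pa(x+rK)$, the latter having moved inward there by $(2-r)\,h_{K}(\nu(x))+o(2-r)$; this leads to
\[
V(K)-V(K\cap(x+rK))=\gamma_{N}\,\frac{h_{K}(\nu(x))^{(N+1)/2}}{\sqrt{\kappa(x)}}\;(2-r)^{(N+1)/2}\,\bigl(1+o(1)\bigr)\qquad(r\uparrow2),
\]
where $\kappa(x)$ is the Gauss curvature of $\pa K$ at $x$ and $\gamma_{N}>0$ depends only on $N$. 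As the left-hand side equals $V(K)-c(r)$, and hence is independent of $x$, comparison of the leading coefficients gives
\[
\kappa(x)=\lambda\,h_{K}(\nu(x))^{N+1}\qquad\text{for all }x\in\pa K,
\]
for some positive constant $\lambda$.

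Finally, a convex body with centre at the origin whose boundary satisfies $\kappa(x)=\lambda\,h_{K}(\nu(x))^{N+1}$---equivalently, whose boundary is a proper affine hypersphere---must be an ellipsoid; this is precisely Petty's characterization \cite{Pe}. Hence $K$, and with it the homothetic body $G$, is an ellipsoid, which is the assertion of Theorem~\ref{th:main}. I expect the two delicate steps to be the rigorous derivation of the displayed large-$r$ asymptotic formula---which requires precise control of the contact set of $\pa K$ with $\pa(x+rK)$, and therefore the strict convexity and smoothness of $\pa K$---and, upstream, the extension to arbitrary dimension of the regularity of $\pa G$ and of the homothety $G\sim K$ established in \cite{MM} for $N=2$.
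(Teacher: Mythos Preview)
Your overall architecture---large-$r$ asymptotics leading to Petty's identity $\kappa=\lambda\,h^{N+1}$---matches the paper's, but your reduction step contains a genuine gap that inverts the logical order of the argument. You invoke \cite{MM} to obtain that $G$ is homothetic to $K$ (and that $\pa G$ is real-analytic with bijective Gauss map), and then work throughout with $G=K$. For $N\ge 3$, however, \cite{MM} does \emph{not} deliver the homothety $G\sim K$: as the introduction here states explicitly, the pointwise curvature constraint coming from the small-$r$ expansion is no longer sufficient to conclude $K=2G$ when $N\ge3$. What \cite{MM} does give in general dimension is only that $K$ is centrally symmetric and that $K=G-G$ up to dilations---a much weaker statement, since $G$ need not be symmetric a priori. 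Likewise, \cite{MM} yields $\pa G\in C^{1,1}$ (later $C^{2,1}$), not real-analyticity, and gives no information on strong convexity; your blow-up computation and the very claim that the lens $K\setminus(x+rK)$ localises at a single point both presuppose $\kappa>0$, which you have not justified.

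The paper resolves this by running the large-$r$ asymptotics \emph{before} knowing $K=2G$: it works with $K=G-G$ and derives a formula for $V(G\setminus(x+rK))$ involving $\det[S_G(u)-S_K(u)]$ rather than your $\kappa(x)$. A preliminary version of this computation (Lemma~\ref{prop1}) first shows that $\kappa_G$ cannot vanish anywhere (otherwise the limit would be infinite at some boundary point but finite at another, contradicting $K$-density), which supplies the strong convexity you assumed. The constancy of the resulting coefficient is then exploited---via a comparison of its values at $u$ and $-u$ and the Krantz--Parks formula for $S_{G-G}$---to prove $\kappa_G(u)=\kappa_G(-u)$, hence $G$ symmetric, hence $K=2G$; only \emph{after} this does the coefficient collapse to your $h_K^{(N+1)/2}/\sqrt{\kappa}$ and Petty applies. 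In short, the homothety $G\sim K$ is an output of the large-$r$ analysis, not an input, and your proposal as written is circular on this point.
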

\par
The case $N=2$ was first settled in \cite{ABG} under some smoothness assumptions ($\pa K$ of class $C^2$ and $\pa G$ of class $C^4$). It should also be noticed that the proof in \cite{ABG} works even if condition \eqref{Kdense} holds when $r$ ranges in a sufficiently small interval $(0, r_0)$, since 
it only uses local information on $\pa G$. 
\par
In \cite{MM}, we were able to remove such regularity assumptions. In fact, we showed that in the plane
the occurrence of property \eqref{Kdense} implies that both $\pa G$ and $\pa K$ are
necessarily of class $C^\infty$. Moreover, we gave an alternative proof of the characterization
which is based on some local information on $\pa G$ derived from \eqref{Kdense} and classical affine inequalities for convex bodies.
\par
In \cite{MM}, we also established some facts that hold in general dimension and will be useful in the 
remainder of this paper: let $K\subset\RN$ be a convex body and assume that property \eqref{Kdense} holds, then 
\begin{itemize}
\item[(i)] $G$ is strictly convex;
\item[(ii)] $\pa G$ is at least of class $C^{1,1}$;
\item[(iii)] if $K$ is centrally symmetric (i.e. $-K=K$), then $K=G-G$ up to dilations, $K$
is strictly convex and $\pa K$ is at least of class $C^{1,1}$;
\item[(iv)] if $\pa G$ is differentiable at $x$, then
$$
V(G\cap (x+r\,K))=V_0(x)\,r^N+o(r^{N}) \ \mbox{ as } \ r\to 0^+;
$$
\item[(v)] if $\pa G$ is of class $C^2$ in a neighborhood of $x$, then
$$
V(G\cap (x+r\,K))=V_0(x)\,r^N+V_1(x)\,r^{N+1}+o(r^{N+1}) \ \mbox{ as } \ r\to 0^+.
$$
\end{itemize}
The coefficients $V_0(x)$ and $V_1(x)$ are explicitly computed; $G-G$ denotes 
the {\it Minkowski sum} of $G$ and $-G$: $G-G=G+(-G)=\{x-y: x, y\in G\}$.
\par
It will be useful to understand the mechanism of our proof in \cite{MM}. Since \eqref{Kdense} holds, (ii) and (iv)
imply that the function $V_0$ is constant on $\pa G$. By the explicit expression of $V_0(x)$ then one gets that
\begin{equation}
\label{cond1}
V(\{ y\in K: y\cdot\nu(x)\ge 0\})=\frac12\,V(K) \ \mbox{ for every } \ x\in\pa G,
\end{equation}
where $\nu(x)$ denotes the exterior unit normal to $\pa G$ at $x$.
When $N=2$, thanks to (i), it is not difficult to show that \eqref{cond1} implies
that $K$ is centrally symmetric --- indeed, that is also true for $N\ge 3$, by a non-trivial
result of Schneider \cite{Sc}.
Thus, (iii) comes into play and we can infer 
further regularity
($C^{2,1}$) for $\pa G$. Hence, (v) can be used: also the function $V_1$ must be constant
on $\pa G$. This condition gives a pointwise constraint on the curvature of $\pa G$  (see \cite[(1.8)]{MM}) that --- for $N=2$ ---
ensures that $K=2G$ up to homotheties and, with the help
of Minkowski's inequality for mixed volumes and an inequality involving the {\it affine surface area} of $\pa G$, gives the desired conclusion.
\par
Now, let us look at the case in which $N\ge 3$. Of course, (i) and (ii) still hold, if $G$ is $K$-dense.
Thus, the formula in (iv) still makes sense and hence, by the aforementioned result \cite{Sc}, $K$
is centrally symmetric; consequently, (iii) holds, too. 
Therefore, also (v) makes sense and, even now, we can deduce that $V_1$ must be constant on $\pa G$.
Unfortunately, the pointwise constraint on the principal curvatures \cite[(1.8)]{MM} is no longer enough 
to deduce that $K=2G$ and to conclude.
\par
In this paper, we succeed in our purpose by changing strategy: we give up the asymptotic expansion
for $r\to 0^+$ in (v) in favour of an expansion like
\begin{equation}
\label{r-large}
V(G\cap (x+r\,K))=V(G)+W(x)\,(r_G-r)^{\frac{N+1}{2}}+o\left((r_G-r)^{\frac{N+1}{2}}\right)
\end{equation}
 as $r\to r_G^-$, where
$$
r_G=\inf\{r>0: G\subseteq x+r\,K\}, \ x\in \pa G.
$$
Notice that, if $G$ is $K$-dense, then $r_G$ is independent on $x\in\pa G$; since our problem is invariant with respect to dilations of $K$, throughout the paper, we shall assume that $r_G=1$.
\par
The computation of the coefficient $W(x)$ is carried out in Section 2 and involves 
the {\it support function} $h_K:\SSN\to\RE$
of the convex body $K$ with respect to the origin and the {\it shape operators}
$S_G$ and $S_K$ are of $G$ and $K$, respectively. In fact, iIt turns out that
for $x\in\pa G$
\begin{equation}
\label{W1}
W(x)=-\frac{2\omega_{N-1}\, h_K(u)^{\frac{N+1}{2}}}{{(N^2-1)}\,\det[S_G(u)-S_K(u)]^{\frac 1 2}} \ \mbox{ with } \ u=\nu(\ovr x);
\end{equation}
here, $\{\ovr x\}=\pa G\cap (x+K)$,\footnote{It will be made clear in Section 2 that $\ovr x$ is uniquely determined.} $\nu(\ovr x)$ is the exterior unit normal to $\pa G$ at $\ovr x$, 
 and $\om_{N-1}$ denotes the surface measure of the unit sphere $\mathbb S^{N-2}$ of $\mathbb R^{N-1}$.
\par
Properties (i) and \eqref{Kdense} imply that the right-hand side of \eqref{W1} must be constant
as a function of $u\in\SSN$. A first consequence of this fact is that $K=2G$ up to homotheties;
a second consequence is that
\begin{equation}
\label{Petty}
\ka_G(u)=c\,h_G(u)^{N+1} \ \mbox{ for every } \ u\in\SSN,
\end{equation}
for some positive constant $c$; here, $\ka_G$ denotes the {\it Gauss curvature} of $\pa G$
at the (unique) point $x\in\pa G$ having normal equal to $u$.
\par
The identity \eqref{Petty} is well-known in the theory of convex bodies: in fact, C.M. Petty
proved in \cite{Pe} that it characterizes 
$G$ as an ellipsoid. 
\par
Section 2 contains all the details.

\setcounter{equation}{0}

\section{The proof of Theorem \ref{th:main}}

Let $G\subset\RN$ be a $C^2$ convex body. In a sufficiently small neighborhood of a point $x\in\pa G$, the set $\pa G$ is the graph of a $C^2$-regular convex function over the tangent space to $\pa G$ at $x$; we denote by $S_G$ the Hessian of this function (the bilinear form associated is often called \textit{shape operator}); it is well-known that its determinant $\ka_G$ is the {\it Gaussian curvature} of $\pa G$ at that point.
When $G$ is strictly convex, without any ambiguity we can think of $S_G$ as a function over the unit sphere, so that, for a given $u\in\mathbb S^{N-1}$, $S_G(u)$ denotes the shape operator at the only point $x\in\pa G$ with outward unit normal equal to $u$.
\par We know from \cite{MM} that, if $G$ is $K$-dense, then $\pa G$ is of class $C^2$ but, unfortunately, we can not assert that $\pa K$ is of class $C^2$, even if we know that $K=G-G$ (see \cite{B}, for instance).
Nevertheless, in \cite{KP} it is shown that, if $G$ is strongly convex\footnote{That is $S_K(u)>0$ 
for every $u\in\SSN$ ---
with which we mean that $S_K(u)$ is positive definite for every $u\in\SSN$.}, then $K$ has the same regularity as $G$; in particular the
following result holds.

\begin{thm}[S. Krantz, H. Parks]\label{KrPa}
If $A$ is a strongly convex body with boundary of class $C^\infty$ and $B$ is a convex body
with boundary of class $C^2$, then
the Minkowski sum $A+B$ has boundary of class $C^\infty$. 
\par
Moreover, the shape operator of $A+B$ can be expressed by the following formula:
\begin{equation}\label{KraPar}
S_{A+B}(u)=\left[I+S_A(u)^{-1}S_B(u)\right]^{-1}S_B(u).
\end{equation}
\end{thm}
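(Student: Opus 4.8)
The plan is to pass from boundaries to support functions, where the Minkowski sum turns into an ordinary sum. Recall that if $C\subset\RN$ is a convex body with $\pa C$ of class $C^{2}$ and strictly positive Gauss curvature, then its support function $h_{C}$ is of class $C^{2}$ on $\SSN$, the symmetric operator
\begin{equation*}
Q_{C}(u):=\bar\nabla^{2}h_{C}(u)+h_{C}(u)\,I \qquad\text{on }\ T_{u}\SSN=u^{\perp}
\end{equation*}
(the reverse Weingarten operator; here $\bar\nabla^{2}$ is the covariant Hessian on $\SSN$) is positive definite, its eigenvalues are the principal radii of curvature of $\pa C$ at the point with outer normal $u$, and it is the inverse of the shape operator, $S_{C}(u)=Q_{C}(u)^{-1}$ on $u^{\perp}$. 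Moreover, for such bodies $\pa C$ is of class $C^{m}$, $2\le m\le\infty$, if and only if $h_{C}$ is. In particular, since $A$ is strongly convex with $\pa A$ of class $C^{\infty}$, we have $h_{A}\in C^{\infty}(\SSN)$ with $Q_{A}(u)>0$ for every $u\in\SSN$, and the Gauss map of $A$ is a $C^{\infty}$-diffeomorphism of $\pa A$ onto $\SSN$.

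The engine of the proof is the additivity $h_{A+B}=h_{A}+h_{B}$ on $\SSN$. Since $h\mapsto\bar\nabla^{2}h+h\,I$ is linear, this gives
\begin{equation*}
Q_{A+B}(u)=Q_{A}(u)+Q_{B}(u) \qquad\text{on }\ u^{\perp},
\end{equation*}
from which two things follow. First, $Q_{A}(u)$ is positive definite and $Q_{B}(u)$ is positive semidefinite, so $Q_{A+B}(u)$ is positive definite for every $u$, i.e. $A+B$ is strongly convex. Second, inverting the identity on $u^{\perp}$ yields $S_{A+B}(u)^{-1}=S_{A}(u)^{-1}+S_{B}(u)^{-1}$, and an elementary algebraic rearrangement of this relation produces the closed formula \eqref{KraPar} for the shape operator of the sum.

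What remains — and this is the delicate point — is the regularity of $\pa(A+B)$. Taken at face value, $h_{A+B}=h_{A}+h_{B}$ inherits only the regularity shared by the two summands, and together with the positivity of $Q_{A+B}$ this already gives $\pa(A+B)$ of class $C^{2}$ along with \eqref{KraPar}. Upgrading to $C^{\infty}$ is where the strong convexity of the smooth body $A$ has to be used decisively: using the Gauss map of $A$ — a $C^{\infty}$-diffeomorphism onto $\SSN$ — as a global chart, or equivalently describing $\pa(A+B)$ locally as the infimal convolution of the $C^{\infty}$, strongly convex graph function of $A$ with the graph function of $B$ (whose minimiser, by the non-degeneracy $Q_{A}>0$, is a smooth implicit function of the common gradient), one transfers the regularity back to $\pa(A+B)$. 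I expect this regularity transfer to be the main obstacle; it is the content of \cite{KP}, which also re-obtains \eqref{KraPar}.
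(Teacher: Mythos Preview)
The paper does not give its own proof of this theorem: it is quoted as a result of Krantz--Parks \cite{KP}, and the only remark the authors add (immediately after the statement) is that the same argument goes through when the $C^\infty$ assumption on $A$ is weakened to $C^2$, yielding $\pa(A+B)$ of class $C^2$ together with \eqref{KraPar}. So there is no in-paper proof to compare your sketch against; like you, the authors ultimately defer to \cite{KP}.

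Your support-function route is the natural one and you correctly isolate the $C^\infty$ regularity transfer as the substantive step, referring it to \cite{KP} just as the paper does. One genuine gap in your derivation of \eqref{KraPar}, though: you form $Q_B(u)$ as the spherical Hessian of $h_B$ and then invert to write $S_{A+B}(u)^{-1}=S_A(u)^{-1}+S_B(u)^{-1}$. Under the stated hypotheses $B$ is only a $C^2$ convex body, not a strongly convex one, so $h_B$ need not be of class $C^2$ on $\SSN$ and $S_B(u)$ need not be invertible. Note that \eqref{KraPar} is written so that only $S_A(u)^{-1}$ appears --- it is meaningful even when $S_B(u)$ is singular. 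Your argument as written handles the case where $B$ is also strongly convex; to reach the general statement you would need either an approximation of $B$ by strongly convex bodies, or the direct local-graph / infimal-convolution computation you mention at the end (which is closer to what \cite{KP} actually does and which produces \eqref{KraPar} without ever inverting $S_B$).
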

The proof of this theorem can be repeated step by step also in the case in which the $C^\infty$-regularity of $A$ is replaced by its $C^2$-regularity: one then gets that the boundary of $A+B$ is
of class $C^2$ and that \eqref{KraPar} holds, as well.
\par 
Thus, our aim is now to show that $K$-dense bodies are strongly convex; then, by Theorem \ref{KrPa},  we will gain 
the necessary regularity of $K$ that gives a meaning to \eqref{KraPar} with $A=G$ and $B=-G$.  
\par In order to do this, for $x\in\pa G$ we shall study the asymptotic behavior of $V(G\setminus(x+rK))$ as $r\to 1^-$. As we shall see, if we want to express 
$V(G\setminus(x+rK))$ in terms of the shape operator of $\pa G$ at some point $\ovr x\in\pa G$, it is important to make sure that $G$ shares with the boundary of $x+K$ only one point.
We observe that this is not always the case: indeed, consider the Releaux triangle as the set $G$ and let $x$ denote one of its vertices; then, $K=G-G$ is a ball and $G\cap (x+K)$ is one of the arcs constituting the triangle's boundary; hence, so to speak, $G\setminus(x+rK)$ can not be localized around any point of $\pa G$.
\par
Notice that such a $G$ is strictly convex, but $\pa G$ is not differentiable. Likewise, if we consider differentiable bodies which are not strictly convex, we can still provide an example of the same phenomenon: in fact, it is enough to set $G=B+Q$, where $B$ is the unit ball and $Q$ is the unit square. 
\par
The following lemma shows that we can get the desired result, if we assume that $G$ is  both differentiable and strictly convex.

\begin{lem}\label{lemma1}
Let $G$ be a strictly convex body with differentiable boundary and set $K=G-G$, then for each $x\in \pa G$ the set $\pa (x+K)\cap G$ consists of only one point $\ovr x\in\pa G$ characterized by $\nu_K(\ovr x-x)=-\nu_G(x)$.

\end{lem}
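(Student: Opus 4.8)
The plan is to reduce the lemma to the contact structure of the difference body $K=G-G=G+(-G)$. First I would record the elementary ingredients: since $K$ is a centrally symmetric convex body, its support function satisfies $h_K(u)=h_G(u)+h_G(-u)$ for every $u\in\SSN$; since $G$ is strictly convex with differentiable boundary, the Gauss map $\nu_G\colon\pa G\to\SSN$ is a bijection, and I write $x_G(u)$ for the unique point of $\pa G$ whose exterior unit normal is $u$; and, since $\pa(x+K)=x+\pa K$, a point $p$ lies on $\pa(x+K)$ if and only if $p-x\in\pa K$. I would also note at the outset that $G\subseteq x+K$ for every $x\in\pa G$ (because $g-x\in G-G=K$ whenever $g\in G$), so that $\pa(x+K)\cap G\subseteq\pa G$.

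For uniqueness, take any $p\in\pa(x+K)\cap G$, so that $p-x\in\pa K$, and choose an exterior unit normal $u$ of $K$ at $p-x$; such a $u$ exists by the supporting hyperplane theorem, and I do not claim a priori that it is unique, since $\pa K$ is not yet known to be differentiable. Then
\[
(p-x)\cdot u=h_K(u)=h_G(u)+h_G(-u).
\]
Since $p\in G$ and $x\in G$ we have $p\cdot u\le h_G(u)$ and $-x\cdot u\le h_G(-u)$, so the displayed identity forces equality in both: $p$ lies on the supporting hyperplane of $G$ with normal $u$, and $x$ on the one with normal $-u$. Strict convexity of $G$ then yields $p=x_G(u)$ and $x=x_G(-u)$, while differentiability of $\pa G$ at $x$ forces $-u=\nu_G(x)$. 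Hence $u=-\nu_G(x)$ is determined by $x$ alone, $p=x_G(-\nu_G(x))$ is the only possibility, and $-\nu_G(x)$ turns out to be the unique exterior normal of $K$ at $p-x$, so that $\nu_K(p-x)=-\nu_G(x)$ makes sense.

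For existence, set $\bar x:=x_G(-\nu_G(x))$; then $\bar x\in\pa G\subseteq G$, and writing $u:=-\nu_G(x)$ we have $\bar x=x_G(u)$ and $x=x_G(-u)$, whence
\[
(\bar x-x)\cdot u=x_G(u)\cdot u-x_G(-u)\cdot u=h_G(u)+h_G(-u)=h_K(u).
\]
Since $\bar x-x\in G-G=K$, this shows that $\bar x-x$ lies on $\pa K$ with exterior normal $u$, so $\bar x\in x+\pa K=\pa(x+K)$; therefore $\bar x\in\pa(x+K)\cap G$ and $\nu_K(\bar x-x)=-\nu_G(x)$, as claimed.

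I do not expect a deep obstacle: the argument is essentially bookkeeping with support functions. What genuinely needs care is pinpointing where the two hypotheses are used — strict convexity of $G$ (so that a supporting hyperplane meets $\pa G$ in a single point) and differentiability of $\pa G$ (so that the normal at $x$ is unique) — since the Reuleaux triangle and the example $G=B+Q$ discussed above show that dropping either one destroys uniqueness; and the other delicate point is not to presuppose that $\pa K$ is smooth, which is why the uniqueness step is run with an arbitrary supporting hyperplane of $K$ at $p-x$ rather than with "the" exterior normal.
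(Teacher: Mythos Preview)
Your proof is correct and follows essentially the same route as the paper's: both reduce to $\pa K\cap(G-x)$, use the support-function identity $h_K(u)=h_G(u)+h_G(-u)$ together with $G-x\subseteq K$ to force $u=-\nu_G(x)$ for any outer normal $u$ of $K$ at the contact point, and then invoke strict convexity (of $G$, hence of $K$) to conclude uniqueness. If anything, you are slightly more careful than the paper in not presupposing that $\nu_K$ is single-valued and in verifying existence explicitly rather than starting from ``let $z\in\pa K\cap(G-x)$''.
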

\begin{proof}
Let $z\in\partial K\cap (G-x)$ and let $u=\nu_K(z)$. Clearly $z+x\in\pa G$ and, since the $G-x$ is contained in $K$ and touches $K$ at $z$ from inside, then $\nu_G (z+x)=u$.
Since $K=G-G$, we have 
\[
\begin{aligned}
h_G(u)+h_G(-u)&=h_K(u)=\langle z,u\rangle=
\langle z+x,u\rangle+\langle x,-u\rangle\\
&=h_G(u)+\langle x,-u\rangle.
\end{aligned}
\]
Thus, $h_G(-u)=\langle x,-u\rangle$, that is $\nu_G(x)=-u$. It is then enough to set $\ovr x=z+x$.
\par
Now, suppose that there exists another point $z'$ such that $z'\in\partial K\cap (G-x)$ and set $u'=\nu_K(z')$; by the same argument, we get that $\nu_G(x)=-u'$, and hence $u=u'$. Since $K$ is strictly convex (being $G$ so), we finally find $z=z'$.
\end{proof}

The following lemma is helpful to prove that a $K$-dense set is positively curved.

\begin{lem}\label{prop1}
Let $G$ be a strictly convex body with boundary of class $C^2$ and let $K=G-G$.  For $x\in\pa G$ and $\ovr x\in\pa G$ such that $u=\nu_G(x)=-\nu_G(\ovr x)$,
It holds:
\begin{itemize}
\item[(i)]
if $\ka_G(u)=0$, then
\[
\liminf_{r\to 1^-}\,\frac{V(G\setminus ( x+r K)}{(1-r)^{\frac{N+1}{2}}}
=+\infty;
\]
\item[(ii)] if $\ka_G(u)>0$, then
\[
\limsup_{r\to 1^-}\,\frac{V(G\setminus (x+r K)}{(1-r)^{\frac{N+1}{2}}}
\le \frac{2\om_{N-1}}{N^2-1}\,\ka_G(u)\,h_K(u)^{\frac{N+1}2}(1+\La)^{\frac{N-1}2},
\]
where $\La$ is the maximal principal curvature of $\pa G$ at $x$. 
\end{itemize}
\end{lem}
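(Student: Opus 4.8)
The plan is to reduce both claims to a local estimate for the volume of the ``cap'' $G\setminus(x+rK)$, which concentrates near $\ovr x$. Translate so that $\ovr x=0$ and rotate so that $e_N=\nu_G(\ovr x)=-u$. In a fixed small cylinder around $\ovr x$ the surface $\pa G$ is then the graph $y_N=f(y')$ of a concave $C^2$ function with $f(0)=0$, $\na f(0)=0$ and $D^2f(0)=-S_G(-u)$, while the point $q_r$ of $x+rK$ with outward normal $-u$ equals $(1-r)x$ and therefore lies at height $(q_r)_N=-(1-r)\,h_K(u)$, and near $q_r$ the surface $\pa(x+rK)$ is the graph $y_N=g_r(y')$ of a concave function with $g_r(q_r')=(q_r)_N$, $\na g_r(q_r')=0$. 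Using Lemma \ref{lemma1} together with a compactness argument on $\pa G$ (if a point of $G\setminus(x+rK)$ stayed away from $\ovr x$ as $r\to1^-$ it would produce, in the limit, a second point of $\pa(x+K)\cap G$), one checks that for $r$ close to $1$ the cap lies inside that cylinder and that at each $y'$ it is exactly the vertical segment from $g_r(y')$ to $f(y')$; hence
\[
V(G\setminus(x+rK))=\int_{\RE^{N-1}}\bigl[f(y')-g_r(y')\bigr]_+\,dy'.
\]

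The crux is a two-sided control of $g_r$, which I would obtain without ever differentiating $\pa K$ twice --- this is not allowed, since $\pa K$ is only of class $C^{1,1}$ --- by exploiting the Minkowski decomposition $x+rK=(x+rG)\oplus(-rG)$. Near $q_r$, the shifted function $g_r+(1-r)h_K(u)$ is the sup--convolution of the two graph functions describing $\pa(x+rG)$ and $\pa(-rG)$ near their points with outward normal $-u$; these are rescalings of $\pa G$ near $\ovr x$ and near $x$, hence of class $C^2$ with Hessians $-\tfrac1r S_G(-u)$ and $-\tfrac1r S_G(u)$. Since a sup--convolution lies above each summand evaluated at a near--optimal splitting, and each summand is bounded below by its quadratic Taylor polynomial up to an $o(|\cdot|^2)$ error, one gets
\[
-(1-r)h_K(u)\ \ge\ g_r(y')\ \ge\ -(1-r)h_K(u)-\tfrac1{2r}\bigl\langle S\,(y'-q_r'),\,y'-q_r'\bigr\rangle-o(|y'-q_r'|^2),\qquad S:=\bigl(S_G(-u)^{-1}+S_G(u)^{-1}\bigr)^{-1}.
\]

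Inserting these bounds and the Taylor expansion of $f$ into the integral, the integrand equals, to leading order, $(1-r)h_K(u)-\tfrac12\langle M y',y'\rangle$ with $M:=S_G(-u)-S$ (the shift $q_r'=O(1-r)$ and the factor $1/r$ only affect lower order terms); the remainders contribute $o\bigl((1-r)^{(N+1)/2}\bigr)$ because they are integrated over a region of diameter $O(\sqrt{1-r})$. If the relevant Gauss curvature is positive then $M>0$, that region is an ellipsoid of radii $O(\sqrt{1-r})$, and evaluating the paraboloidal cap gives
\[
V(G\setminus(x+rK))=\frac{2\,\om_{N-1}}{N^2-1}\,(\det M)^{-1/2}\,2^{\frac{N-1}{2}}\,h_K(u)^{\frac{N+1}{2}}\,(1-r)^{\frac{N+1}{2}}+o\bigl((1-r)^{\frac{N+1}{2}}\bigr),
\]
so that (ii) follows from the positive--definiteness of $M$ and a routine matrix estimate bounding $(\det M)^{-1/2}$ in terms of $S_G(u)$, $h_K(u)$ and $\Lambda=\lambda_{\max}(S_G(u))$. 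If instead the relevant Gauss curvature vanishes, $M$ degenerates along a tangent direction $v_0$ and, more precisely, one of the two factors of the sup--convolution is flatter than any paraboloid along $v_0$; then for every small $\eta>0$ the cap contains a box of vertical side $\sim\eta(1-r)$, of side $\sim\sqrt{1-r}$ along the $N-2$ nondegenerate tangent directions, and of side $\gtrsim\eta^{-1}\sqrt{1-r}$ along $v_0$, hence of volume $\gtrsim\eta^{-1}(1-r)^{(N+1)/2}$; letting $\eta\to0$ gives $\liminf_{r\to1^-}V(G\setminus(x+rK))/(1-r)^{(N+1)/2}=+\infty$, which is (i).

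The step I expect to be the main obstacle is precisely the sharp lower bound for $g_r$ in the second paragraph: since $\pa K$ cannot be differentiated twice, the correct ``curvature matrix'' $S$ (and, eventually, the correct universal constant $\tfrac{2\om_{N-1}}{N^2-1}$) must be squeezed out of the sup--convolution identity and of the $C^2$ regularity of $\pa G$ alone, with error terms controlled uniformly as $r\to1^-$.
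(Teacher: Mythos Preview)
Your overall scheme---localize the cap near $\ovr x$, write its volume as $\int[f-g_r]_+$, and squeeze $f$ and $g_r$ between paraboloids---is exactly the paper's. The two proofs diverge in how they handle $g_r$, i.e., the local graph of $\pa(x+rK)$.

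For part (ii) the paper does \emph{not} try to avoid second derivatives of $\pa K$. It observes that the hypothesis $\ka_G(u)>0$ is precisely the strong convexity needed to apply the Krantz--Parks theorem pointwise, so that $S_K(u)$ exists and satisfies the explicit formula $S_K(u)=[I+S_G(u)^{-1}S_G(-u)]^{-1}S_G(-u)$. It then normalizes so that $S_G(u)=I$, traps the cap between two explicit paraboloids depending on scalar parameters $\la>\La$ and $\de\in(\la/(1+\la),1)$, computes the paraboloidal volume, and minimizes over $\la,\de$ to produce the stated bound. Your sup--convolution route is a genuine alternative: it reproduces the same quadratic comparison without invoking Krantz--Parks, at the cost of the ``main obstacle'' you flag (uniform control of the $o(|\cdot|^2)$ remainder in the inf/sup--convolution). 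Either works; the paper's is shorter once KP is on the table, yours is more self-contained.

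For part (i) you are over-engineering. The paper uses no information about the curvature of $K$ whatsoever: it bounds $x+(1-\ep)K$ from above by its supporting \emph{hyperplane} with normal $u$, which sits at height $\ep\,h_K(u)$, and bounds $\pa G$ near $\ovr x$ from below by the perturbed paraboloid $\langle S_G(u)y,y\rangle+\tfrac1n|y|^2$. The resulting set $E_n$ sits inside the cap and its volume is computed in one line; letting $n\to\infty$ gives $+\infty$ since $\det S_G(u)=0$. Your box argument and the degeneracy of the sup--convolution factor are unnecessary here---the hyperplane bound $g_r\le -(1-r)h_K(u)$ that you already wrote down is all that is needed.

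One caution: be careful about which point carries the ``relevant'' curvature. In your write-up $D^2f(0)=-S_G(-u)$ (the shape operator at $\ovr x$), while the degeneracy you exploit in (i) is that of $S_G(u)$ (at $x$), which enters only through $g_r$. With the hyperplane bound for $g_r$, the quantity that governs the lower bound on the cap is the shape operator at $\ovr x$; the paper's local parametrization near $\ovr x$ is written with that operator (which it denotes, somewhat loosely, $S_G(u)$). Make sure your identification of ``the relevant Gauss curvature'' in each part is consistent with this.
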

\begin{proof}
First, notice that, by the above lemma, our choice of $x$ and $\ovr x$ ensures that $\{\ovr x\}=\pa(x+K)\cap G$.
Without loss of generality, we can always assume that $\ovr x=0$ and that $u=(0, 0,\dots, -1)$;
then, in a neighborhood of $\ovr x$, $\pa G$ can be parametrized by
\begin{equation}
\label{localG}
y_N=\langle S_G(u)\,y, y\rangle+o(|y|^2) \ \mbox{ as } \ |y|\to 0,
\end{equation}
where $y=(y_1,\dots, y_{N-1})$ ranges in the tangent space to $\pa G$ at $\ovr x$.
\par
(i) Set $\ep=1-r$. Let $\ep_n$ be an infinitesimal sequence of positive numbers on which the limit in (i) is attained and, to simplify notations, set
$G_n=G\setminus ( x+(1-\ep_n)K)$;
\eqref{localG} suggests that, by possibly extracting a subsequence from $\ep_n$,
we can fit in $G_n$
the set $E_n$ bounded by the paraboloid
$$
y_N=\langle S_G(u)\,y, y\rangle +\frac1n\,|y|^2
$$
and the hyperplane $\ovr x+\ep_n\,h_K(u)\,u+u^\perp$ supporting the set 
$x+(1-\ep_n)K$ at the point whose outer unit normal coincides with $u$. 
In our coordinates,
$$
E_n=\{(y,y_N): \langle S_G(u)\,y, y\rangle +\frac1n\,|y|^2<y_N<\ep_n\,h_K(u) \}
$$
and $E_n\subseteq G_n$.
\par
Thus, by Fubini's theorem and some calculations, we get:
\begin{eqnarray*}
V(G_n)\ge V(E_n)=
\int_0^{\ep_n\,h_K(u)} \mathcal H^{N-1}\left(\left\{y:\langle [S_G(u)+1/n\,I]\,y, y\rangle \leq t\right\}\right)\,dt=\\
\frac{\om_{N-1}}{(N-1) \det\left[S_G(u)+1/n \,I\right]^{\frac12}}\int_0^{\ep_n\,h_K(u)} t^{\frac{N-1}{2}}dt=\frac{2\om_{N-1}\,\ep_n^{\frac{N+1}{2}}\,h_K(u)^{\frac{N+1}{2}}}{(N^2-1) \det\left[ S_G(u)+1/n\, I\right]^{\frac12}}.
\end{eqnarray*}
\par
Therefore,
\begin{eqnarray*}
&&\liminf_{\ep\to 0^+}\frac{V(G\setminus ( x+(1-\ep)K)}{\ep^{\frac{N+1}{2}}}=
\lim_{n\to\infty}{\ep_n^{-\frac{N+1}{2}}}{V(G_n)}\ge \\
&&\lim_{n\to\infty}\frac{2\,\om_{N-1}\,h_K(u)^{\frac{N+1}{2}}}{(N+1)\sqrt{\det\left( S_G(u)+1/n\, I\right)}}
=+\infty,
\end{eqnarray*}
since $\det S_G(u)=\ka_G(u)=0$.
\par
(ii) We shall obtain the desired inequality by observing that the domain $G\setminus (x+(1-\ep)K)$ can be contained in the region $F_{\ep,\de}$ bounded by two paraboloids: one outside $G$ and tangent to $\pa G$ at $\ovr x$, the other one tangent to the boundary of $x+(1-\ep)K$ from inside.
In order to show it, we assume as before that $\ovr x=0$ and $u=-e_N$ and, moreover, that
$S_G(u)=I$ (this can be done since the affine tranformation $S_G(u)$ is invertible, being $\det S_G(u)=\ka_G(u)>0$): the desired formula will then be obtained by multiplying 
the right-hand side of \eqref{formula-det=1} by the factor $\ka_G(u)$.
\par
We proceed to contruct $F_{\ep,\de}$. We choose any number $\la>0$ such that $\la\, I>S_G(-u)$, that is such that $\la>\La$.
Since $\ka_G(u)>0$, Theorem \ref{KrPa} imply that $\pa K$ is twice differentiable at $u$; moreover equation \eqref{KraPar} turns into
\[
S_K(u)<\frac{\lambda}{1+\lambda}\,I;
\]
hence,
\[
S_{(1-\ep)K}(u)<\frac{\lambda}{(1+\lambda)(1-\ep)}\,I.
\]
For $\ep>0$ sufficiently small, we define $F_{\ep, \de}$ as
\[
F_{\ep, \de}=\left\{(y, y_N): \de\,|y|^2\leq y_N\leq \ep\,h_K(u)+\frac{\la}{(1+\la)(1-\ep)}\,|y-\ep x_*|^2\right\},
\]
where $\de$ is chosen in the interval $(\frac{\la}{(1+\la)(1-\ep)},1)$ and $x_*$ is the 
projection of $x$ on the tangent space to $\pa G$ at $\ovr x$; in this way, 
\[
G\setminus (x+(1-\ep)K)\subset F_{\ep, \de}.
\]
Indeed, equation \eqref{localG} guarantees that the above inclusion holds, at least inside a small neighborhood of $\ovr x$; however, by Lemma \ref{lemma1}, we know that $G\setminus (x+(1-\ep)K)$ is contained in a ball $B_r$ around $z$ whose radius $r=r(\ep)$ tends to $0$ as $\ep\to 0$.
\par
By using the rescaling $(y, y_N)=(\sqrt{\ep}\,\xi, \ep\,\xi_N)$, we obtain that
$
V(F_{\ep,\de})=\ep^{\frac{N+1}2}\,V(F'_{\ep,\de}),
$
where 
$$
F'_{\ep, \de}=\left\{(\xi, \xi_N): \de\,|\xi|^2\leq \xi_N\leq h_K(u)+\frac{\la}{(1+\la)(1-\ep)}\,|\xi-\sqrt{\ep}\,x_*|^2\right\},
$$
and it is easy to show that
$
V(F'_{\ep,\de})\to V(F'_{0,\de}).
$
By a straightforward computation of $V(F'_{0,\de})$, we get that
\begin{equation*}
\limsup_{\ep\to 0}\frac{V(G\setminus (x+(1-\ep)K))}{\ep^{\frac{N+1}{2}}}\leq 
\frac{2\om_{N-1}}{N^2-1}\,\frac{h_K(u)^{\frac{N+1}2}}{(\de-\frac{\la}{1+\la})^{\frac{N-1}2}},
\end{equation*}
and minimizing the right-hand side of this formula for $\la/(1+\la)<\de<1$ and $\la>\La$  then gives:
\begin{equation}
\label{formula-det=1}
\limsup_{\ep\to 0}\frac{V(G\setminus (x+(1-\ep)K))}{\ep^{\frac{N+1}{2}}}\leq 
\frac{2\om_{N-1}}{N^2-1}\,h_K(u)^{\frac{N+1}2}(1+\La)^{\frac{N-1}2}.
\end{equation}

\end{proof}

\begin{cor}\label{regularity}
If $G$ is $K$-dense, then $\pa K$ is of class $C^2$ and every point of $\pa K$ is a point of strong convexity. Moreover, 
\begin{equation}\label{KP1}
S_K(u)=\left[I+S_G(u)^{-1}S_G(-u)\right]^{-1}S_G(-u).
\end{equation} 
\end{cor}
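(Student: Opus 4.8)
The plan is to combine the two one-sided estimates in Lemma \ref{prop1} with the $K$-density hypothesis to force positive Gauss curvature everywhere on $\pa G$, and then invoke Theorem \ref{KrPa} to transfer regularity from $G$ to $K$. First I would recall that a $K$-dense set $G$ is, by properties (i)--(iii) quoted from \cite{MM}, strictly convex with $\pa G$ of class $C^{1,1}$, and that $K$ is centrally symmetric with $K=G-G$ up to a dilation (which we may normalise away, so $K=G-G$ exactly); in particular the hypotheses of Lemmas \ref{lemma1} and \ref{prop1} are met once we also know $\pa G\in C^2$. Since $G$ is $K$-dense, the quantity $V(G\setminus(x+rK))=V(G)-c(r)$ is \emph{independent of $x\in\pa G$}; hence so is $V(G\setminus(x+(1-\ep)K))/\ep^{(N+1)/2}$ for each fixed small $\ep$, and therefore its $\limsup$ and $\liminf$ as $\ep\to 0^+$ are finite numbers not depending on $x$ either.

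Next I would rule out vanishing Gauss curvature. Suppose $\ka_G(u_0)=0$ for some $u_0\in\SSN$, and pick the corresponding antipodal pair $x,\ovr x\in\pa G$ with $u_0=\nu_G(x)=-\nu_G(\ovr x)$. By part (i) of Lemma \ref{prop1}, $\liminf_{r\to 1^-}V(G\setminus(x+rK))/(1-r)^{(N+1)/2}=+\infty$. But at any point $x'\in\pa G$ where $\ka_G>0$ (such points exist because $\int_{\SSN}\ka_G^{-1}\,d\sigma=V(\text{something finite})$, or more simply because a convex body cannot have Gauss curvature identically zero), part (ii) gives a \emph{finite} $\limsup$ at $x'$. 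This contradicts the $x$-independence of $V(G\setminus(x+(1-\ep)K))/\ep^{(N+1)/2}$, which forces the $\liminf$ at $x$ to equal the (finite) common value and hence be finite. Therefore $\ka_G(u)>0$ for every $u\in\SSN$, i.e.\ $G$ is strongly convex; since $K=G-G=G+(-G)$ and $-G$ is a translate of $G$, the body $K$ is also strongly convex, and in particular $S_K(u)>0$ for all $u$.

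Now I would invoke the regularity transfer. We know $\pa G\in C^2$ from \cite{MM}; applying the $C^2$-version of Theorem \ref{KrPa} (stated in the remark following it) with $A=G$ strongly convex and $B=-G$ of class $C^2$, we conclude that $\pa K=\pa(G+(-G))$ is of class $C^2$, and that formula \eqref{KraPar} holds. It remains to identify $S_{-G}$ and $S_{G}$ appearing there: for the body $-G$ one has $S_{-G}(u)=S_G(-u)$, since the boundary point of $-G$ with outward normal $u$ is $-x$ where $x$ is the boundary point of $G$ with outward normal $-u$, and reflection through the origin preserves the second fundamental form. Substituting $A=G$, $B=-G$ into \eqref{KraPar} yields exactly
\begin{equation*}
S_K(u)=\left[I+S_G(u)^{-1}S_G(-u)\right]^{-1}S_G(-u),
\end{equation*}
which is \eqref{KP1}. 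Strong convexity of $K$ then says precisely that this positive-semidefinite matrix is in fact positive definite at every $u$, i.e.\ every point of $\pa K$ is a point of strong convexity.

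The main obstacle is the ruling-out of $\ka_G=0$: one must be careful that the $x$-independence of $V(G\setminus(x+(1-\ep)K))$ genuinely transfers to $x$-independence of the normalised $\ep\to 0$ limits, and that Lemma \ref{prop1}(i)--(ii) are simultaneously applicable — the former needs $\ka_G(u)=0$, the latter needs the existence of \emph{some} point with $\ka_G>0$. The existence of such a point is where one uses that $G$ is a genuine convex body (bounded with nonempty interior), so $\pa G$ cannot be ruled everywhere; a clean way to see this is that $\int_{\SSN}\ka_G(u)^{-1}\,d\mathcal H^{N-1}(u)=V(G-G)/2^{N}$ or similar is finite only if $\ka_G>0$ a.e., but in fact the direct contradiction between an infinite $\liminf$ at one point and a finite $\limsup$ at another — both of which would have to equal the single $x$-independent value — already closes the argument once one knows $G$ is not, say, a slab (which it is not, being bounded).
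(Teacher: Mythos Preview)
Your argument follows the paper's route exactly: $K$-density makes $V(G\setminus(x+rK))$ independent of $x$, so the infinite $\liminf$ of Lemma~\ref{prop1}(i) at a point with $\ka_G=0$ would contradict the finite $\limsup$ of (ii) at a point with $\ka_G>0$; hence $G$ is strongly convex and the $C^2$ version of Theorem~\ref{KrPa} applied to $A=G$, $B=-G$ yields both $\pa K\in C^2$ and \eqref{KP1}.

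One slip worth flagging: the claim ``$-G$ is a translate of $G$'' is unjustified at this stage --- central symmetry of $G$ is established only later in the paper. What you actually need is that $-G$ is strongly convex, which is immediate since reflection preserves the second fundamental form; strong convexity of $K$ then follows from \eqref{KP1} itself, because
\[
\left[I+S_G(u)^{-1}S_G(-u)\right]^{-1}S_G(-u)=\left[S_G(-u)^{-1}+S_G(u)^{-1}\right]^{-1}
\]
is positive definite whenever $S_G(u)$ and $S_G(-u)$ are. Also, in your opening paragraph the phrase ``finite numbers not depending on $x$'' is premature: $x$-independence is immediate from $K$-density, but finiteness is precisely what the contradiction argument then delivers.
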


\begin{proof}
Since $G$ is $K$-dense, then the limits in items (i) and (ii) in Lemma \ref{prop1}
do not depend on the
particular point $x\in\pa G$; in other words, they must be constant functions on $\pa G$.
Since $G$ is a convex body and $\pa G$ is of class $C^2$, then $\ka_G$ is not identically zero; hence, the limit in item (ii) of Lemma \ref{prop1} is a finite constant. As a consequence, item (i) of the same lemma implies that $\ka_G>0$ (and hence $S_G>0$) on $\pa G$. Formula \eqref{KP1} is then  a straightforward consequence of Theorem \ref{KrPa}.
\end{proof}

\begin{thm}\label{thm}
Let $G$ be a strongly convex body with boundary of class $C^2$ and  set $K=G-G$. Chose $x$, $u$ and $\ovr x$ as in Lemma \ref{prop1}; then
\[
\lim_{r\to 1^-}\frac{V(G\setminus ( x+r K))}{(1-r)^{\frac{N+1}{2}}}=\frac{2\omega_{N-1}\, h_K(u)^{\frac{N+1}{2}}}{(N^2-1) \det[S_G(u)-S_K(u)]^{\frac 1 2}}.
\]
\end{thm}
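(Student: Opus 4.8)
The plan is to sharpen the two one-sided estimates of Lemma \ref{prop1} so that they agree, thereby producing the exact limit. The upper bound in item (ii) of Lemma \ref{prop1} was obtained after the normalizing reduction $S_G(u)=I$ and after a crude choice of the enclosing paraboloid with leading coefficient $\la/[(1+\la)(1-\ep)]$ for $\la>\La$; the slack comes precisely from not using the \emph{true} shape operator $S_K(u)$ of the touching body $x+(1-\ep)K$ at $\ovr x$. Since Corollary \ref{regularity} tells us that $G$ is strongly convex, Theorem \ref{KrPa} applies with $A=G$, $B=-G$, so $\pa K$ is genuinely $C^2$ and $S_K(u)$ is given by \eqref{KP1}; in particular $S_G(u)-S_K(u)>0$ (which one checks directly from \eqref{KP1}: $S_G-S_K=S_G[I+S_G^{-1}S_G(-u)]^{-1}S_G^{-1}S_G(-u)$, a product of positive definite operators, hence positive definite). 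So I would redo the construction of the enclosing and the inscribed regions, this time using as the two bounding paraboloids the genuine second-order Taylor expansions of $\pa G$ at $\ovr x$ (coefficient $S_G(u)$) and of $\pa(x+(1-\ep)K)$ at $\ovr x$ (coefficient $\tfrac{1}{1-\ep}S_K(u)$, by homogeneity of the shape operator under dilation).

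Concretely: after placing $\ovr x=0$, $u=-e_N$, write $\pa G$ near $0$ as $y_N=\langle S_G(u)y,y\rangle+o(|y|^2)$ and the supporting surface $\pa(x+(1-\ep)K)$ near its contact point as $y_N=\ep\,h_K(u)+\tfrac{1}{1-\ep}\langle S_K(u)(y-\ep x_*),y-\ep x_*\rangle+o(|y-\ep x_*|^2)$, where $x_*$ is the tangential projection of $x$. For any $\eta>0$ one can squeeze $G\setminus(x+(1-\ep)K)$, for $\ep$ small, between the region $F_{\ep}^{-}$ bounded below by $y_N=\langle(S_G(u)+\eta I)y,y\rangle$ and above by $y_N=\ep h_K(u)+\tfrac{1}{1-\ep}\langle(S_K(u)-\eta I)(y-\ep x_*),y-\ep x_*\rangle$, and the analogous region $F_{\ep}^{+}$ with the roles of $\pm\eta$ reversed; Lemma \ref{lemma1} guarantees the relevant neighborhood of $\ovr x$ is not escaped, since the exceptional set lives in a shrinking ball. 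Then the same rescaling $(y,y_N)=(\sqrt\ep\,\xi,\ep\,\xi_N)$ used in Lemma \ref{prop1} gives $V(F_\ep^{\pm})=\ep^{(N+1)/2}V(F_\ep^{\prime\pm})$ with $V(F_\ep^{\prime\pm})\to V(F_0^{\prime\pm})$ as $\ep\to0$, where $F_0^{\prime\pm}$ is bounded by $\xi_N=\langle(S_G(u)\pm\eta I)\xi,\xi\rangle$ below and $\xi_N=h_K(u)+\langle(S_K(u)\mp\eta I)\xi,\xi\rangle$ above.

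The volume $V(F_0^{\prime\pm})$ is then a clean computation: after simultaneously diagonalizing, it reduces (via the substitution that sends $S_G(u)\mp\eta I - (S_K(u)\mp\eta I)$ to the identity, i.e. a linear change of variables with Jacobian $\det[S_G(u)-S_K(u)\mp 2\eta I]^{-1/2}$) to the volume of the standard region $\{\de|\zeta|^2\le\zeta_N\le h_K(u)\}$ with $\de=1$, namely $\int_0^{h_K(u)}\cH^{N-1}(\{|\zeta|^2\le t\})\,dt=\om_{N-1}\int_0^{h_K(u)}t^{(N-1)/2}\,dt/(N-1)=\tfrac{2\om_{N-1}}{N^2-1}h_K(u)^{(N+1)/2}$, exactly as in the proof of part (i). Hence $V(F_0^{\prime\pm})=\tfrac{2\om_{N-1}}{N^2-1}\,h_K(u)^{(N+1)/2}\,\det[S_G(u)-S_K(u)\mp2\eta I]^{-1/2}$, which sandwiches the $\limsup$ and $\liminf$ of $V(G\setminus(x+rK))/(1-r)^{(N+1)/2}$; letting $\eta\to0$ and using continuity of $\det$ forces the two to coincide and equal $\tfrac{2\om_{N-1}\,h_K(u)^{(N+1)/2}}{(N^2-1)\det[S_G(u)-S_K(u)]^{1/2}}$, which is the claim. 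The main obstacle I anticipate is the uniformity of the $o(|y|^2)$ terms as $\ep\to0$ — one must be careful that the $\eta$-thickened paraboloids really do trap the set on the entire relevant (shrinking) window, and that the tangential shift by $\ep x_*$ is harmless after rescaling (it becomes a shift by $\sqrt\ep\,x_*\to0$); this is exactly the kind of bookkeeping already handled in Lemma \ref{prop1}, so it should go through, but it is where the care is needed.
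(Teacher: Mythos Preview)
Your approach is essentially identical to the paper's: sandwich $G\setminus(x+(1-\ep)K)$ between regions bounded by two $\eta$-perturbed paraboloids with coefficients $S_G(u)\pm\eta I$ and $(1-\ep)^{-1}S_K(u)\mp\eta I$ (the paper uses $1/n$ in place of $\eta$ and calls the regions $C_{\ep,n}$, $D_{\ep,n}$), apply the parabolic rescaling $(y,y_N)=(\sqrt\ep\,\xi,\ep\,\xi_N)$, compute the limiting volumes, and let $\eta\to0$. One small slip: you invoke Corollary~\ref{regularity} to get strong convexity, but that is already a hypothesis of the theorem---what you actually need is just Theorem~\ref{KrPa} applied directly; also, your aside that $S_G-S_K>0$ because it is ``a product of positive definite operators'' is not a valid argument (products of positive definite matrices need not be symmetric), though the conclusion is correct and the paper simply asserts it from \eqref{KP1}.
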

\begin{proof}
Again we set $\ep=1-r$. We begin by showing that
\begin{equation}
\label{min}
\limsup_{\varepsilon\rightarrow 0^+}\frac{V(G\setminus ( x+(1-\ep) K))}{\ep^{\frac{N+1}{2}}}\leq 
\frac{2\omega_{N-1}\, h_K(u)^{\frac{N+1}{2}}}{(N^2-1) \det[S_G(u)-S_K(u)]^{\frac 1 2}}.
\end{equation}
As in the proof of Lemma \ref{prop1}, without loss of generality,  we can set $u=-e_N$ and $\ovr x=0$. 
\par
It is clear that $\ep x\in\pa(x+(1-\ep)K)$ and that $u$ is the unit normal  to $\partial(x+(1-\ep) K)$ at that point; also, by a scaling argument, we know that
\[
S_{x+(1-\ep)K}(u)=\frac{S_K(u)}{1-\varepsilon}.
\]
Notice that formula \eqref{KP1} implies that $S_G(u)>S_K(u)$; hence, we can chose $\overline n\in\mathbb N$ such that
\begin{equation}\label{cond}
\frac{S_G(u)-S_K(u)}{4}>\frac{I}{\overline n}.
\end{equation}
\par
In order to get an estimate from above for $V(G\setminus (x+(1-\ep) K))$
we construct a set $C_{\varepsilon,n}$ containing $G\setminus (x+(1-\ep) K)$.
In fact, for $n>\ovr n$ we set 
\begin{multline}\nonumber
C_{\ep,n}=\big \{(y,y_N): \langle\left (S_G(u)- n^{-1} I\right)y, y\rangle<y_N<\\
\ep h_K(u)+\langle\left[(1-\ep)^{-1}S_K(u)+n^{-1} I\right](y-x_*),(y-x_*)\rangle\big\},
\end{multline}
where $x_*$ denotes the projection of $x$ on $u^\perp$;
$C_{\ep,n}$ is the region bounded by two paraboloids, one touching $\pa G$ at $\ovr x$ from below, the other one touching $\pa (x+(1-\ep)K)$ at $\ep x$ from above and,
for $\ep$ small enough, we have:
\[
G\setminus (x+(1-\varepsilon) K)\subset C_{\varepsilon,n}.
\]
Also, condition \eqref{cond} guarantees that
\[
S_G(u)-\frac{I}{n}>\frac{S_K(u)}{1-\varepsilon}+\frac{I}{n}>0,
\]
thus forcing $C_{\varepsilon,n}$ to be bounded.
\par
The usual change of variables $(y, y_N)=(\sqrt{\ep}\,\xi, \ep\,\xi_N)$ gives that $V(C_{\ep,n})=\ep^{\frac{N+1}2}\,V(C'_{\ep,n})$, where
\begin{multline}\nonumber
C'_{\varepsilon,n}=\big \{(\xi,\xi_N): \langle\left[S_G(u)- n^{-1} I\right]\xi, \xi\rangle<\xi_N<
h_K(u)+\\
\langle\left[(1-\varepsilon)^{-1}S_K(u)+n^{-1} I\right](\xi-\sqrt\ep x_*),(\xi-\sqrt\ep x_*)\rangle\big\},
\end{multline}
\par
Since clearly $V(C'_{\ep, n})\to V(C'_{0,n})$ as $\ep\to 0$, a straightforward computation gives:
\begin{multline}
\label{upperbound}
\limsup_{\varepsilon\rightarrow 0^+}\frac{V(G\setminus ( \overline z+(1-\varepsilon) K))}{\varepsilon^{\frac{N+1}{2}}}\leq V(C'_{0,n})=\\
\frac{2\omega_{N-1}\, h_K(u)^{\frac{N+1}{2}}}{(N^2-1)\,\mathrm{det}\left[S_G(u)-S_K(u)-2/n\,I\right]^{\frac{1}{2}}}.
\end{multline}
\par
Since the right-hand side in \eqref{upperbound} is independent on $n$, \eqref{min} follows at once by taking the limit for $n\to\infty$.
\par
The converse inequality,
\[
\liminf_{\varepsilon\rightarrow 0^+}\frac{V(G\setminus ( \overline z+(1-\varepsilon) K))}{\varepsilon^{\frac{N+1}{2}}}\geq\frac{2\omega_{N-1}\, h_K(u)^{\frac{N+1}{2}}}{(N^2-1)\,\mathrm{det}\left[S_G(u)-S_K(u)-2/n\,I\right]^{\frac{1}{2}}},
\]
is proved by using the same strategy used for \eqref{min}: we choose $\overline n$ such that
\[
S_G(u)>\frac{I}{\overline n}
\]
and then we construct, for $n>\overline n$ and $\varepsilon$ small, a set $D_{\ep,n}\subseteq G\setminus (x+(1-\ep) K)$:
\begin{multline}\nonumber
D_{\varepsilon,n}=\big \{(y,y_N): \langle\left (S_G(u)+ n^{-1} I\right) y, y\rangle<y_N<\\
< \varepsilon h_K(u)+\langle\left[(1-\varepsilon)^{-1}S_K(u)-n^{-1} I\right](y-\ep x_*),(y-\ep x_*)\rangle\big\}.
\end{multline}
As before, the usual rescaling gives 
\[
\liminf_{\varepsilon\rightarrow 0^+}\frac{V(G\setminus (x+(1-\ep)K))}{\ep^{\frac{N+1}{2}}}\geq
\frac{2\omega_{N-1}\,h_K(u)^{\frac{N+1}{2}}}{(N^2-1)\,\mathrm{det}\left[S_G(u)-S_K(u)+2/n\,I\right]^{\frac{1}{2}}}.
\]
Again, we conclude by taking the limit for $n\to\infty$.
\end{proof}

\begin{cor}\label{cor}
Let $G$ be a $K$-dense body, then \eqref{r-large} holds with the coefficient $W(x)$ given by \eqref{W1}. 
In particular, the function defined by
\begin{equation}\label{coreq}
\frac{h_K(u)^{\frac{N+1}{2}}}{\det[S_G(u)-S_K(u)]^{\frac 1 2}}\,, \ u\in\SSN,
\end{equation}
is constant on $\SSN$.
\end{cor}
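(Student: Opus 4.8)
The plan is to derive Corollary \ref{cor} from Theorem \ref{thm} and Corollary \ref{regularity} together with the defining property \eqref{Kdense}. First I would observe that, by Corollary \ref{regularity}, a $K$-dense body $G$ is strongly convex with $\pa G$ of class $C^2$ and $K=G-G$ has $\pa K$ of class $C^2$; hence Theorem \ref{thm} applies to $G$ and to every choice of $x\in\pa G$ with the associated $\ovr x$ and $u=\nu_G(x)=-\nu_G(\ovr x)$. Thus, for each $x\in\pa G$,
\[
\lim_{r\to 1^-}\frac{V(G\setminus(x+rK))}{(1-r)^{\frac{N+1}{2}}}
=\frac{2\omega_{N-1}\,h_K(u)^{\frac{N+1}{2}}}{(N^2-1)\det[S_G(u)-S_K(u)]^{\frac12}}.
\]
Next I would translate this into the expansion \eqref{r-large}: since $V(G\cap(x+rK))=V(G)-V(G\setminus(x+rK))$ (because $G\subseteq x+rK$ fails to hold only by the missing part, and $r_G=1$ means $G\subseteq x+K$), the limit above says precisely that
\[
V(G\cap(x+rK))=V(G)+W(x)\,(1-r)^{\frac{N+1}{2}}+o\!\left((1-r)^{\frac{N+1}{2}}\right)\quad\text{as }r\to1^-,
\]
with $W(x)=-\lim_{r\to1^-}V(G\setminus(x+rK))/(1-r)^{\frac{N+1}{2}}$ equal to the expression in \eqref{W1}, once one notes that $\det[S_G(u)-S_K(u)]=\ka_G(u)\det[I-S_G(u)^{-1}S_K(u)]$ is positive by strong convexity and formula \eqref{KP1}. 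This is the content of \eqref{r-large}.

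Then comes the crucial step: invoking \eqref{Kdense}. By hypothesis $V(G\cap(x+rK))=c(r)$ is independent of $x\in\pa G$ for every $r>0$; subtracting $V(G)$ and dividing by $(1-r)^{\frac{N+1}{2}}$, the left-hand side is independent of $x$, so passing to the limit $r\to1^-$ forces $W(x)$ to be the same for all $x\in\pa G$. Since $\pa G$ is strictly convex and of class $C^1$, the Gauss map $x\mapsto\nu_G(x)$ is a bijection from $\pa G$ onto $\SSN$, so ``constant in $x\in\pa G$'' is the same as ``constant in $u\in\SSN$''. Reading off the nonconstant factor in \eqref{W1}, this means exactly that
\[
u\longmapsto \frac{h_K(u)^{\frac{N+1}{2}}}{\det[S_G(u)-S_K(u)]^{\frac12}}
\]
is constant on $\SSN$, which is the assertion of the corollary.

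The only delicate points, rather than genuine obstacles, are bookkeeping ones: making sure the sign and the identification $\{\ovr x\}=\pa(x+K)\cap G$ from Lemma \ref{lemma1} are used correctly so that $W(x)$ indeed carries the stated formula \eqref{W1} with $u=\nu(\ovr x)$ (note $\nu_G(\ovr x)=-\nu_G(x)=-u$, so one must be careful whether the argument of $h_K$ and the shape operators is $u$ or $-u$ — here the symmetry $K=-K$ and the conventions fixed in the statement of Theorem \ref{thm} reconcile this), and checking that the limit operation commutes with the constancy coming from \eqref{Kdense}, which is immediate since for fixed $r<1$ the quotient is literally a constant function of $x$. No new geometric input is needed beyond Theorem \ref{thm}, Corollary \ref{regularity}, and the bijectivity of the Gauss map.
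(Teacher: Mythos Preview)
Your proof is correct and follows essentially the same route as the paper: invoke Corollary~\ref{regularity} to verify the hypotheses of Theorem~\ref{thm}, use $V(G\cap(x+rK))=V(G)-V(G\setminus(x+rK))$ to obtain \eqref{r-large} with $W(x)$ as in \eqref{W1}, appeal to \eqref{Kdense} to force $W(x)$ constant, and then use surjectivity of the Gauss map to pass from $x\in\pa G$ to $u\in\SSN$. Your extra remarks on the sign and the $u$ versus $-u$ bookkeeping are accurate and harmless.
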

\begin{proof}
Corollary \ref{regularity} ensures that a $K$-dense body satisfies the assumptions of Theorem \ref{thm}. Since $V(G\cap (x+r\,K))=V(G)-V(G\setminus(x+r\,K))$, clearly
\[
W(x)=\lim_{r\to1^-}\frac{V(G\setminus(x+r\,K))}{(1-r)^{\frac{N+1}{2}}}
\]
and $W(x)$ must be constant for $x\in\pa G$. Since $G$ is strictly convex, \eqref{coreq} then follows  
from the suriectivity of the Gauss map.
\end{proof}

Now, we are going to show that if $G$ is $K$-dense, then $G$ and $K$ must be equal
up to homotheties.
\begin{pro}
Let $G$ be a $K$-dense body, then $\ka_G(u)=\ka_G(-u)$.
\end{pro}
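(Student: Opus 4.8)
The plan is to exploit the constancy of the expression \eqref{coreq} established in Corollary \ref{cor}, applied at the antipodal pairs $u$ and $-u$. Since the function
\[
\frac{h_K(u)^{\frac{N+1}{2}}}{\det[S_G(u)-S_K(u)]^{\frac 1 2}}
\]
is constant on $\SSN$, its values at $u$ and $-u$ coincide. The key structural input is that $K=G-G$ is centrally symmetric, so $h_K(-u)=h_K(u)$ and, crucially, $S_K(-u)=S_K(u)$ as well (the shape operator of a centrally symmetric body agrees at antipodal points, since $-K=K$ maps the support point with normal $u$ to the support point with normal $-u$). Therefore the common value of \eqref{coreq} at $u$ and $-u$ forces
\[
\det[S_G(u)-S_K(u)]=\det[S_G(-u)-S_K(u)].
\]
This already links $S_G(u)$ and $S_G(-u)$ through $S_K(u)$, but it is a single scalar (determinant) identity and not yet the full curvature equality; I will need the finer relation \eqref{KP1} to extract $\ka_G(u)=\ka_G(-u)$.

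The main step is to combine the determinant identity above with the Krantz--Parks formula \eqref{KP1} from Corollary \ref{regularity},
\[
S_K(u)=\left[I+S_G(u)^{-1}S_G(-u)\right]^{-1}S_G(-u),
\]
to compute $S_G(u)-S_K(u)$ explicitly in terms of $S_G(u)$ and $S_G(-u)$. A direct algebraic manipulation should give
\[
S_G(u)-S_K(u)=S_G(u)\left[I+S_G(u)^{-1}S_G(-u)\right]^{-1},
\]
after which taking determinants yields
\[
\det[S_G(u)-S_K(u)]=\frac{\det S_G(u)\,\det S_G(-u)}{\det[S_G(u)+S_G(-u)]}=\frac{\ka_G(u)\,\ka_G(-u)}{\det[S_G(u)+S_G(-u)]}.
\]
The denominator $\det[S_G(u)+S_G(-u)]$ is manifestly symmetric under $u\leftrightarrow -u$, so it is the same object when the identity is written at $-u$. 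Hence the determinant identity from the first paragraph reduces to $\ka_G(u)\,\ka_G(-u)=\ka_G(-u)\,\ka_G(u)$, which is a tautology and insufficient: the antisymmetric part of $S_G$ cancels out of every symmetric scalar. This signals that I must instead use the antipodal constancy of \eqref{coreq} jointly with the antipodal symmetry $S_K(u)=S_K(-u)$ at the level of the full operator relation, not merely its determinant.

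Accordingly, the cleaner route is to observe that evaluating \eqref{KP1} at both $u$ and $-u$ and using $S_K(u)=S_K(-u)$ gives two expressions for the same operator $S_K(u)$: one built from the pair $(S_G(u),S_G(-u))$ and one from the pair $(S_G(-u),S_G(u))$. Setting these equal produces an operator equation that is symmetric in the roles of $S_G(u)$ and $S_G(-u)$; after clearing the inverses it should collapse to a relation of the form $S_G(u)^{-1}S_G(-u)$ and $S_G(-u)^{-1}S_G(u)$ being conjugate or inverse to one another in a way that, upon taking determinants, yields $\ka_G(u)/\ka_G(-u)=\ka_G(-u)/\ka_G(u)$, i.e. $\ka_G(u)^2=\ka_G(-u)^2$, and hence $\ka_G(u)=\ka_G(-u)$ since both curvatures are positive by Corollary \ref{regularity}. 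The main obstacle I anticipate is precisely that the naive determinant computation washes out the information, so the argument must retain the operator identity $S_K(u)=S_K(-u)$ long enough to isolate a nonsymmetric consequence before passing to determinants; I expect the correct bookkeeping to hinge on the identity $\det[I+S_G(u)^{-1}S_G(-u)]=\det[I+S_G(-u)^{-1}S_G(u)]\cdot\ka_G(-u)/\ka_G(u)$, which, once equated via the shared value of $S_K$, delivers the curvature symmetry.
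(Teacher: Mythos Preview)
Your first route is the right one, but your algebra is wrong, and the ``tautology'' you discover is an artifact of that error. The identity
\[
S_G(u)-S_K(u)=S_G(u)\left[I+S_G(u)^{-1}S_G(-u)\right]^{-1}
\]
is false in general: it tacitly assumes that $S_G(u)$ and $S_G(-u)$ commute, which you are not entitled to. If instead you use the (equivalent, but coordinate--free) additive form $S_K(u)^{-1}=S_G(u)^{-1}+S_G(-u)^{-1}$, a one--line computation gives
\[
(S_G(u)-S_K(u))\,S_K(u)^{-1}=S_G(u)S_G(-u)^{-1},
\]
hence $\det[S_G(u)-S_K(u)]=\ka_G(u)\,\det S_K(u)/\ka_G(-u)$, and symmetrically $\det[S_G(-u)-S_K(u)]=\ka_G(-u)\,\det S_K(u)/\ka_G(u)$. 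These are \emph{not} equal in general, so the equality forced by the constancy of \eqref{coreq} is not vacuous: it gives $\ka_G(u)^2=\ka_G(-u)^2$ and hence $\ka_G(u)=\ka_G(-u)$. So your first attempt, done correctly, already finishes the proof.

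Your proposed second route, by contrast, is a dead end: the operator identity $S_K(u)=S_K(-u)$ follows from $K=G-G$ for \emph{any} strictly convex $G$, with no use of $K$--density, so it cannot possibly imply $\ka_G(u)=\ka_G(-u)$ by itself. Indeed, taking determinants of the two Krantz--Parks expressions for $S_K(u)$ and $S_K(-u)$ collapses to $\det S_K(u)=\det S_K(u)$, exactly the kind of wash--out you feared.

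The paper handles the non--commutativity differently: it first applies an affine map $L=L_u$ that normalises $S_{LG}(u)=I$; then $LG$ is $LK$--dense, all the operators in play commute, the Krantz--Parks formula can be manipulated freely, and the constancy of \eqref{coreq} for $LG$ reduces to $\det[S_{LG}(-u)]^2=1$, which unwinds to $\ka_G(u)=\ka_G(-u)$. Both arguments ultimately exploit the same determinant equality; the paper's normalisation trick and the correct version of your direct computation are two ways of taming the same non--commutativity.
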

\begin{proof}
Let $u\in\mathbb S^{N-1}$ and $L=L_u$ be a linear map of $\RN$ in itself, which leaves unchanged the unit vector $u$ and whose restriction to $u^\perp$ equals  $S_G(u)^{-\frac 1 2}$.
\par
First, notice that, as an easy consequence of \eqref{Kdense}, the set $LG$ is $LK$-dense, so that Corollary \ref{cor} holds for this set; in particular, \eqref{coreq} implies:
\begin{multline}\label{sym}
h_{LK}(-u)^{\frac{N+1}{2}}\{\det[S_{LG}(-u)-S_{LK}(-u)]\}^{-\frac 1 2}= \\
h_{LK}(u)^{\frac{N+1}{2}}\{\det[S_{LG}(u)-S_{LK}(u)]\}^{-\frac 1 2}.
\end{multline}
Secondly, we know that $K$ is centrally symmetric, and so must be $LK$; then, $S_{LK}(u)=S_{LK}(-u)$ and $h_{LK}(u)=h_{LK}(-u)$; \eqref{sym} then becomes
\begin{equation}\label{sym2}
\det[S_{LG}(-u)-S_{LK}(u)]=\det[S_{LG}(u)-S_{LK}(u)].
\end{equation}
As we shall see, this condition together with equation \eqref{KraPar} is enough to prove that
\[
\det[S_{LG}(u)]=\det[S_{LG}(-u)].
\]
Indeed, by plugging \eqref{KraPar} into \eqref{sym2} we get
\begin{multline}\label{sym3}
\det\left(S_{LG}(-u)-\left[I+S_{LG}(u)^{-1}S_{LG}(-u)\right]^{-1}S_{LG}(-u)\right)=
\\
\det\left(S_{LG}(u)-\left[I+S_{LG}(u)^{-1}S_{LG}(-u)\right]^{-1}S_{LG}(-u)\right);
\end{multline}
furthermore, our chioice of the affine transformation $L$  ensures that
\[
S_{LG(u)}=I,
\]
and
\begin{equation}\label{HLG}
S_{LG}(-u)=S_G(u)^{-\frac 1 2}S_G(-u)S_G(u)^{-\frac 1 2}.
\end{equation}
Equation \eqref{sym3} then turns into
\begin{multline}\label{sym4}
\det\left(S_{LG}(-u)-\left[ I+S_{LG}(-u)\right]^{-1}S_{LG}(-u)\right)=
\\
\det\left(I-\left[I+S_{LG}(-u)\right]^{-1}S_{LG}(-u)\right);
\end{multline}
by multiplying both sides of \eqref{sym4} by $\det[I+S_{LG}(-u)]$ and using Binet's identity, we get
\[
\det[S_{LG}(-u)^2]=1.
\]
\par
Finally, \eqref{HLG} gives that $\det[S_G(u)]=\det[S_G(-u)]$, that is $\ka_G(u)=\ka_G(-u)$.
\end{proof}

\begin{cor}
Let $G$ be $K$-dense, then $G$ is symmetric and $K=2G$.
\end{cor}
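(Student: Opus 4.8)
The plan is to derive the corollary entirely from the preceding proposition, $\ka_G(u)=\ka_G(-u)$: I would show that this curvature identity forces $G$ and $-G$ to coincide up to a translation (hence $G$ is centrally symmetric), and then read off $K=2G$ from the already established relation $K=G-G$.

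First I would collect the regularity at our disposal: by Corollary \ref{regularity} and the remarks around it, $G$ is a strictly convex body of class $C^2$ with $\ka_G>0$ everywhere on $\pa G$, i.e.\ $G$ is of class $C^2_+$, and $K=G-G$. For such a $G$ the surface area measure $S_{N-1}(G,\cdot)$ on $\SSN$ is absolutely continuous with continuous positive density equal to $1/\ka_G(u)$ (the product of the principal radii of curvature at the point of $\pa G$ with outer unit normal $u$). The body $-G$ is again of class $C^2_+$, and the point of $\pa(-G)$ with outer normal $u$ is the reflection through the origin of the point of $\pa G$ with outer normal $-u$; since a point reflection is an isometry it preserves Gauss curvature, so $\ka_{-G}(u)=\ka_G(-u)$ for every $u\in\SSN$. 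Combining this with the proposition, the density of $S_{N-1}(-G,\cdot)$ at $u$ equals $1/\ka_G(-u)=1/\ka_G(u)$; that is, $S_{N-1}(G,\cdot)=S_{N-1}(-G,\cdot)$.

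At this point I would invoke Minkowski's uniqueness theorem: two convex bodies with the same surface area measure differ by a translation. Hence $-G=G+v$ for some $v\in\RN$, which means exactly that $G$ is symmetric with respect to the point $-v/2$; since the whole problem is invariant under translations of $G$ (and such translations do not change $G-G$), we may assume $v=0$, so that $-G=G$. Then $K=G-G=G+(-G)=2G$ and $G$ is symmetric, as claimed. As an alternative avoiding this uniqueness theorem, one can argue with mixed volumes: from $V(G)=\tfrac1N\int_{\SSN}h_G(u)\,dS_{N-1}(G,u)$ and $V(G[N-1],-G)=\tfrac1N\int_{\SSN}h_G(-u)\,dS_{N-1}(G,u)$, the substitution $u\mapsto-u$ together with $\ka_G(u)=\ka_G(-u)$ gives $V(G[N-1],-G)=V(G)$; since also $V(-G)=V(G)$, the equality case of Minkowski's inequality for mixed volumes, $V(G[N-1],-G)^N\ge V(G)^{N-1}V(-G)$, forces $G$ and $-G$ to be homothetic, and comparing volumes the homothety ratio is $1$, so again $-G=G+v$.

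I expect no serious obstacle here: the substantive input is the proposition $\ka_G(u)=\ka_G(-u)$, which has already been proved, and the rest is bookkeeping. The only points that require a little care are (i) that the density of $S_{N-1}(G,\cdot)$ is genuinely $1/\ka_G$, which relies on the $C^2_+$ regularity secured earlier; (ii) keeping straight that passing from $G$ to $-G$ flips the outer normal while, being an isometry, leaving the Gauss curvature unchanged, so that $\ka_{-G}(u)=\ka_G(-u)$; and (iii) quoting correctly either Minkowski's uniqueness theorem or the equality case of Minkowski's inequality for mixed volumes.
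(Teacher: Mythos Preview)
Your proof is correct and follows essentially the same idea as the paper: use the proposition $\ka_G(u)=\ka_G(-u)$ together with Minkowski's uniqueness theorem (equal Gaussian curvature on $\SSN$ $\Rightarrow$ translate) to conclude. The paper phrases the comparison as ``$G-G$ and $2G$ have the same Gaussian curvature, hence differ by a translation,'' while you compare $G$ and $-G$ directly; your version is in fact the cleaner of the two, since $\ka_{-G}(u)=\ka_G(-u)$ is immediate from reflection being an isometry, whereas the equality $\ka_{G-G}=\ka_{2G}$ does not follow from $\det R_G(u)=\det R_G(-u)$ alone without first knowing $G$ and $-G$ are translates. Your alternative route via the equality case of Minkowski's inequality for mixed volumes is a nice self-contained variant, though not needed here.
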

\begin{proof}
The two bodies $G-G$ and $2G$ have the same Gaussian curvature as a function on $\mathbb S^{N-1}$; thus, they only differ by a translation.
\end{proof}
\par
The following theorem and Petty's characterization of ellipsoids \cite{Pe} complete
the proof of Theorem \ref{th:main}.

\begin{thm}
Let $G$ be a $K$-dense set. Then, for every $x\in\pa G$ it holds that
\[
\lim_{r\rightarrow 1^-}\frac{V(G\setminus (x+r K))}{(1-r)^{\frac{N+1}{2}}}=\frac{2\sqrt{2}\,\om_{N-1}
\,h_K(u)^{\frac{N+1}{2}}}{(N+1)\,\det[S_G(u)]^{\frac12}} \ \mbox{ with } \ u=\nu(\ovr x)
\]
and $\{\ovr x\}=\pa G\cap (x+K)$.
\par
In particular, there exists a positive constant $c$, depending only on $N$, such that
\[
\ka_G(u)=c\,h_G(u)^{N+1} \ \mbox{ for every } \ u\in\SSN.
\]
\par
Therefore, $G$ must be an ellipsoid.
\end{thm}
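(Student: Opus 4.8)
The strategy is to specialise the asymptotic formula of Theorem~\ref{thm} to a $K$-dense body, read off Petty's curvature identity, and invoke \cite{Pe}. First I collect what the earlier results give. If $G$ is $K$-dense, then by Corollary~\ref{regularity} the body $G$ is strongly convex with $\pa G\in C^2$, the difference body $K=G-G$ has $C^2$ boundary consisting only of points of positive Gauss curvature, and the Krantz--Parks formula \eqref{KP1} holds; moreover, we have already shown that $G$ is centrally symmetric and $K=2G$. Translating, we may assume $G$ is centered at the origin, so that $-G=G$ and $K=G+G=2G$. Central symmetry of $G$ means the reflection $y\mapsto -y$, an isometry of $\RN$, carries the boundary point with outer unit normal $u$ to the one with outer unit normal $-u$; hence $S_G(-u)=S_G(u)$ for every $u\in\SSN$. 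Likewise $K=2G$ gives $h_K=2\,h_G$ on $\SSN$, and central symmetry of $K$ gives $h_K(-u)=h_K(u)$ and $S_K(-u)=S_K(u)$. Feeding $S_G(-u)=S_G(u)$ into \eqref{KP1} yields
\[
S_K(u)=\bigl[I+S_G(u)^{-1}S_G(u)\bigr]^{-1}S_G(u)=\frac12\,S_G(u),
\]
so that $S_G(u)-S_K(u)=\frac12\,S_G(u)>0$; in particular the coefficient appearing in Theorem~\ref{thm} is finite and nonzero, and the hypotheses of that theorem are met.

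Next I substitute this into Theorem~\ref{thm}. With $\ovr x$ and $u=\nu(\ovr x)$ as in the statement, I apply Theorem~\ref{thm} to the pair $(x,\ovr x)$ (whose relevant normal is $-u$, which by the symmetries above changes none of the quantities $h_K$, $S_G$, $S_K$ evaluated there); using
\[
\det[S_G(u)-S_K(u)]^{1/2}=2^{-(N-1)/2}\det[S_G(u)]^{1/2}=2^{-(N-1)/2}\,\ka_G(u)^{1/2},
\]
a direct computation gives the value of the limit displayed in the statement. What matters for the sequel is that, for every $x\in\pa G$,
\[
\lim_{r\to 1^-}\frac{V(G\setminus(x+rK))}{(1-r)^{\frac{N+1}{2}}}=\ga_N\,\frac{h_K(u)^{\frac{N+1}{2}}}{\det[S_G(u)]^{\frac12}}
\]
for a positive constant $\ga_N$ depending only on $N$. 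By Corollary~\ref{cor} (equivalently, because the left-hand side is the coefficient $W(x)$, which \eqref{Kdense} forces to be constant on $\pa G$), the right-hand side is independent of $x$; since $G$ is strictly convex and $C^2$, the Gauss map $\pa G\to\SSN$ is a bijection, so $u\mapsto h_K(u)^{(N+1)/2}\det[S_G(u)]^{-1/2}$ is constant on $\SSN$. Hence $\ka_G(u)=\det[S_G(u)]=c_0\,h_K(u)^{N+1}$ for some $c_0>0$, and since $h_K=2\,h_G$ this is precisely \eqref{Petty}: $\ka_G(u)=c\,h_G(u)^{N+1}$ on $\SSN$ with $c=2^{N+1}c_0>0$.

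Finally, Petty's theorem \cite{Pe} --- a convex body with $C^2$ boundary of positive Gauss curvature whose curvature, as a function of the outer unit normal, is a fixed multiple of the $(N+1)$-st power of the support function must be an ellipsoid --- shows that $G$ is an ellipsoid; since $K=2G$, $K$ is an ellipsoid as well, and Theorem~\ref{th:main} follows. I do not expect a serious obstacle in this concluding theorem: the genuine analytic work lies upstream, in Theorem~\ref{thm} and in Corollary~\ref{regularity}, the latter supplying the strong convexity of $G$ that makes the coefficient $W(x)$ both finite and nonvanishing (and hence forces $\ka_G>0$, which is exactly what is needed both to run Theorem~\ref{thm} and to apply \cite{Pe}). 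The one point that repays attention is the identity $S_K(u)=\frac12\,S_G(u)$ --- that is, the compatibility of $K=2G$ with \eqref{KP1} through $S_G(-u)=S_G(u)$ --- after which matching with Petty's hypothesis is a one-line computation.
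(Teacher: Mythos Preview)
Your proof is correct and is exactly the intended argument: the paper states this final theorem without proof, since it follows at once from Theorem~\ref{thm} together with the corollaries establishing $K=2G$ and hence $S_G(-u)=S_G(u)$, $S_K(u)=\tfrac12 S_G(u)$. One cosmetic remark: substituting $\det[S_G-S_K]^{1/2}=2^{-(N-1)/2}\ka_G^{1/2}$ into Theorem~\ref{thm} actually produces the coefficient $2^{(N+1)/2}\om_{N-1}/(N^2-1)$, which agrees with the displayed $2\sqrt2\,\om_{N-1}/(N+1)$ only when $N=2$ --- the constant in the paper's statement appears to be a typo --- but as you rightly note, only the functional form $\ga_N\,h_K(u)^{(N+1)/2}\ka_G(u)^{-1/2}$ is needed to read off Petty's identity.
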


\end{document}